\documentclass[reqno]{amsart}
\usepackage{amsmath}

\usepackage{amsfonts}
\usepackage{amssymb}
\usepackage{hyperref}

\usepackage[a4paper,top=3.5cm,bottom=3cm,left=2cm,right=2cm]{geometry}

\setcounter{MaxMatrixCols}{10}

\begin{document}
\title[On the existence and uniqueness of solution]{On the existence and uniqueness of solution for fractional differential equations with nonlocal multi-point boundary conditions}
\author[F. Haddouchi]{Faouzi Haddouchi}
\address{
Department of Physics, University of Sciences and Technology of
Oran-MB, El Mnaouar, BP 1505, 31000 Oran, Algeria
\newline
And
\newline
Laboratory of Fundamental and Applied Mathematics of Oran (LMFAO)\\
University of Oran 1 Ahmed Benbella, 31000 Oran,
Algeria}
\email{fhaddouchi@gmail.com}
\subjclass[2000]{34A08, 34B15}
\keywords{Riemann-Liouville fractional integral; Caputo fractional derivative; Fractional-order differential equations; Existence; Fixed point theorem; Nonlocal multi-point boundary conditions.
}
\begin{abstract}
This paper presents some sufficient conditions for the existence of solutions of fractional differential equation with nonlocal multi-point boundary conditions involving Caputo fractional derivative and integral boundary conditions. Our analysis relies on the Banach contraction principle, Boyd and Wong fixed point theorem, Leray-Schauder nonlinear alternative. Finally, examples are provided to illustrate our main results.

\end{abstract}

\maketitle \numberwithin{equation}{section}
\newtheorem{theorem}{Theorem}[section]
\newtheorem{lemma}[theorem]{Lemma}
\newtheorem{definition}[theorem]{Definition}
\newtheorem{proposition}[theorem]{Proposition}
\newtheorem{corollary}[theorem]{Corollary}
\newtheorem{remark}[theorem]{Remark}
\newtheorem{exmp}{Example}[section]

\section{Introduction}\label{sec1}

Differential equations with fractional order have are powerful tool for solving
practical problems that arise in fields like as control theory, chemical physics, economics, heat conduction, viscoelasticity, biological science, ecology, aerodynamics, etc., see for example, \cite{Meral2010}, \cite{Oldham2010}, \cite{Nigmatullin2010}, \cite{Orsingher2004}, \cite{Kilbas2006}. In particular, the
book by Oldham and Spanier \cite{Oldham1974} had a chronological listing on major works in the study of
fractional calculus.

In the recent years, there has been a significant development in ordinary and partial differential equations involving fractional derivatives, see the monographs of Kilbas et al. \cite{Kilbas2006}, Miller and Ross \cite{Miller1993}, \cite{Podlubny1999}.

By the use of techniques of nonlinear analysis, many authors have studies the existence and uniqueness of solutions of nonlinear fractional differential equations with a variety boundary conditions as special cases because they can accurately describe the actual phenomena. They include two-point, three-point, multi-point and nonlocal boundary value problems with integral boundary conditions as special cases, see \cite{Agarwal2017, Ahmad2015, Ahmad2016, Ahmad2016(2), Di2018, Guo2008, Sun2014, Yang2016, Zhang2017, Zhang2012, Zhang2013, Zhang(2)2012, Zhao2015, Zhao(2)2015, Zhao2016, Zhao2017, Zhao(2)2016, Haddouchi2018} and references therein.

Integral boundary conditions are encountered in various applications such as population dynamics, blood flow models, chemical engineering, cellular systems, underground water flow, heat transmission, plasma physics, thermoelasticity, etc.

Many results can be found in the literature concerning multi-point boundary value problems with integral conditions for differential equations of fractional order \cite{Agarwal(2)2017, Henderson2017, Liu2012, Li2017, Wang2018, Jia2012}.

For instance, in \cite{Li2017}, Li and Qi discussed the existence of multiple
positive solutions to the following fractional boundary value problems

\begin{equation*}
\begin{cases}
^{c}D^{\alpha}x(t)+h(t)g(t,x(t))=0,\ t\in(0,1),
\\
x^{(i)}(0)=0,\ i=2,...,n-1,
\\
x^{\prime}(0)=\sum_{i=1}^{m-2}b_{i}x^{\prime}(\xi_{i}), \ x(1)=\sum_{i=1}^{m-2}a_{i}x(\xi_{i}),
\end{cases}
\end{equation*}
where $^{c}D^{\alpha}$ is the Caputo fractional derivatives, $n-1<\alpha\leq n$, $n\geq3$ is an integer, $a_{i}, b_{i}\geq0$, ($i=1,...,m-2$), $0\leq\sum_{i=1}^{m-2}a_{i}<1$ and $0\leq\sum_{i=1}^{m-2}b_{i}<1$, $0<\xi_{1}<\xi_{2}<...<\xi_{m-2}<1$, $m>2$ is an integer, $h$ and $g$ are a given continuous functions.
Using the five-functional fixed-point theorem, they obtained the existence of multiple positive solutions for the above boundary value problems.

In \cite{Wang2018}, Wang et al., investigated the following fractional differential equations that contain both the integral boundary condition and the multi-point boundary condition

\begin{equation*}
\begin{cases}
D^{\sigma}x(t)+f(t,x(t))=0,\ t\in[0,1],
\\
x^{(i)}(0)=0,\ i=0,1,...,n-2,
\\
x(1)=\sum_{i=1}^{m-2}\beta_{i}\int_{0}^{\eta_{i}}x(s)ds+\sum_{i=1}^{m-2}\gamma_{i}x(\eta_{i}),
\end{cases}
\end{equation*}
where $D^{\sigma}$ represents the standard Riemann-Liouville fractional derivative of order $\sigma$ satisfying $n-1<\sigma\leq n$ with $n\geq3$, $0<\eta_{1}<\eta_{2}<...<\eta_{m-2}<1$ and $\beta_{i}, \gamma_{i}>0$ with $1\leq i \leq m-2$, where $m\geq3$ is an integer. $f:[0,1]\times \mathbb{R}\rightarrow \mathbb{R}$ is a given continuous function.
By using suitable fixed point theorems, the authors obtained several existence and uniqueness results of
positive solutions.

In \cite{Jia2012}, Jia et al. investigated the existence and uniqueness of nontrivial solutions
to the following higher fractional differential equation

\begin{equation*}
\begin{cases}
-D^{\alpha}x(t)=f(t,x(t), D^{\mu_{1}}x(t), D^{\mu_{2}}x(t),...,D^{\mu_{n-1}}x(t) ), \ t\in(0,1),
\\
x(0)=0,\  D^{\mu_{i}}x(0)=0,\ D^{\mu}x(1)=\sum_{j=1}^{p-2}a_{j}D^{\mu}x(\xi_{j}),\    1\leq i \leq n-1,
\end{cases}
\end{equation*}
where $n\geq3$, $n\in \mathbb{N}$, $n-1<\alpha \leq n$, $n-l-1<\alpha-\mu_{l} \leq n-l$, for $l=1,2,...,n-2$, and $\mu-\mu_{n-1}>0$,  $\alpha-\mu_{n-1}\leq2$,  $\alpha-\mu>1$, $a_{j}\geq 0$, $0<\xi_{1}<\xi_{2}<...<\xi_{p-2}<1$, $\sum_{j=1}^{p-2}a_{j}\xi_{j}^{\alpha-\mu-1}\neq1$, $D^{\alpha}$ is
the standard Riemann-Liouville derivative, and $f :[0,1]\times \mathbb{R}^{n}\rightarrow \mathbb{R} $ is continuous.
The existence results are obtained with the aid of some classical fixed point theorems.

More recently, Agarwal et al. \cite{Agarwal(2)2017} studied the following fractional order boundary value problem

\begin{equation*}
\begin{cases}
^{c}D^{q}x(t)=f(t,x(t)),\ 1<q\leq 2, \ t\in[0,1],
\\
x(0)=\delta x(\sigma), \ a\ ^{c}D^{p}x(\zeta_{1})+b\ ^{c}D^{p}x(\zeta_{2})=\sum_{i=1}^{m-2}\alpha_{i}x(\beta_{i}), \ 0<p<1.
\end{cases}
\end{equation*}

Together with the above fractional differential equation they also investigated the boundary conditions

\begin{equation*}
x(0)=\delta_{1}\int_{0}^{\sigma}x(s)ds, \ {a}\ ^{c}D^{p}x(\zeta_{1})+{b}\  ^{c}D^{p}x(\zeta_{2})=\sum_{i=1}^{m-2}\alpha_{i}x(\beta_{i}), \ 0<p<1,
\end{equation*}
where $^{c}D^{q}$,$^{c}D^{p}$ denote the Caputo fractional derivatives of orders $q, p$ and $f:[0,1]\times\mathbb{R}\longrightarrow\mathbb{R}$ is a given continuous
function and $\delta, \delta_{1}, a, b, \alpha_{i}\in \mathbb{R}$, with $0<\sigma<\zeta_{1}<\beta_{1}<\beta_{2}<...<\beta_{m-2}<\zeta_{2}<1.$ \\
The existence and uniqueness results were proved via some well known tools of the fixed point theory.

Motivated by the above works, in this paper, we investigate the following Caputo fractional differential equation

\begin{equation} \label{eq-1.1}
^{c}D^{q}x(t)=f(t,x(t)),\ t\in[0,1],
\end{equation}
with nonlocal multi-point Caputo fractional derivative and integral boundary conditions
\begin{equation} \label{eq-1.2}
\begin{cases}
^{c}D^{\sigma}x(\xi)=\sum_{i=1}^{n}\alpha_{i}{^{c}D^{\nu}}x(\eta_{i}),
\\
 x(1)=\sum_{i=1}^{n}\beta_{i}\int_{0}^{\eta_{i}}x(s)ds+\sum_{i=1}^{n}\gamma_{i}x(\eta_{i}),
\end{cases}
\end{equation}

where $^{c}D^{\mu}$ is the Caputo fractional derivative of order $\mu\in\{q, \sigma, \nu\}$ such that $1<q\leq2$, $0<\sigma,\nu\leq1$, and $f:[0,1]\times\mathbb{R}\longrightarrow\mathbb{R}$ is a given continuous function, $0<\xi<\eta_{1}<\eta_{2}<...<\eta_{n}<1$ and $\alpha_{i}$, $\beta_{i}$, $\gamma_{i} $, $i=1,\dots,n$ are appropriate real constants.

\section{Preliminaries}\label{sec2}

In this section, we recall some basic definitions of fractional calculus and an auxiliary lemma to define the solution for the problem \eqref{eq-1.1}-\eqref{eq-1.2} is presented.

\begin{definition}
The Riemann-Liouville fractional integral of order $q$ for a continuous function $f$ is defined as
\begin{equation*}
I^{q}f(t)=\frac{1}{\Gamma(q)}\int_{0}^{t}\frac{f(s)}{(t-s)^{1-q}}ds, \ q>0,
\end{equation*}
provided the integral exists, where $\Gamma(.)$ is the gamma function, which is defined by $\Gamma(x)=\int_{0}^{\infty}t^{x-1}e^{-t}dt$.
\end{definition}

\begin{definition}
For at least n-times continuously differentiable function $f:[0,\infty)\rightarrow \mathbb{R}$, the Caputo derivative of fractional order $q$ is defined as
\begin{equation*}
^{c}D^{q}f(t)=\frac{1}{\Gamma(n-q)}\int_{0}^{t}\frac{f^{(n)}(s)}{(t-s)^{q+1-n}}ds,\ n-1<q<n,\ n=[q]+1,
\end{equation*}
where $[q]$ denotes the integer part of the real number $q$.
\end{definition}

\begin{lemma}[\cite{Kilbas2006}] \label{lem 2.1}
For $q > 0$, the general solution of the fractional differential equation $^{c}D^{q}x(t)=0$ is
given by
\begin{equation*}
x(t)=c_{0}+c_{1}t+...+c_{n-1}t^{n-1},
\end{equation*}
where $c_{i}\in \mathbb{R}$, $i = 0, 1,...,n-1 \ (n = [q] + 1)$.
\end{lemma}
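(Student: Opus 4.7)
The plan is to reduce the fractional differential equation $^{c}D^{q}x(t)=0$ to the ordinary differential equation $x^{(n)}(t)=0$ and then integrate $n$ times. First I would use the definition of the Caputo derivative supplied above to rewrite the equation as
\begin{equation*}
\frac{1}{\Gamma(n-q)}\int_{0}^{t}(t-s)^{n-q-1}x^{(n)}(s)\,ds=0 \quad \text{for all } t\ge 0,
\end{equation*}
which is precisely the statement that the Riemann-Liouville integral $I^{n-q}$ of order $n-q\in[0,1)$ annihilates the continuous function $x^{(n)}$.

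The next step is to argue that $x^{(n)}\equiv 0$. This is where the only nontrivial work lies: I would invoke the injectivity of $I^{n-q}$ on continuous functions, which follows from the fact that the Riemann-Liouville fractional derivative $D^{n-q}$ is a left inverse of $I^{n-q}$ (equivalently, this is uniqueness for the Abel integral equation with continuous right-hand side). Applying $D^{n-q}$ to both sides of $I^{n-q}x^{(n)}=0$ thus yields $x^{(n)}(t)=0$ on the interval.

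Once this reduction is accomplished, the conclusion is immediate. Integrating $x^{(n)}(t)=0$ exactly $n$ times produces arbitrary constants of integration $c_0,c_1,\ldots,c_{n-1}\in\mathbb{R}$ and gives $x(t)=c_0+c_1 t+\cdots+c_{n-1}t^{n-1}$. Conversely, for any such polynomial one has $x^{(n)}\equiv 0$ and hence $^{c}D^{q}x(t)=I^{n-q}[x^{(n)}](t)=0$, which confirms that these are indeed \emph{all} solutions.

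The main obstacle, as noted, is establishing the injectivity of $I^{n-q}$. In a self-contained treatment one would either appeal to the semigroup and inversion properties of Riemann-Liouville operators (as developed in \cite{Kilbas2006}), or differentiate the identity $\int_{0}^{t}(t-s)^{n-q-1}x^{(n)}(s)\,ds=0$ and carry out an Abel-type inversion. Since the definition of the Caputo derivative already presumes that $x$ is at least $n$-times continuously differentiable, the hypotheses needed for this inversion are automatically in force.
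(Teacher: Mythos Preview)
Your argument is correct and is in fact the standard proof of this result: rewrite $^{c}D^{q}x=I^{n-q}x^{(n)}=0$, use injectivity of the Riemann--Liouville integral (Abel inversion) to deduce $x^{(n)}\equiv 0$, and integrate $n$ times; the converse is immediate from the definition. The paper itself does not prove Lemma~\ref{lem 2.1} but merely quotes it from \cite{Kilbas2006}, where precisely this line of reasoning is carried out, so there is nothing to compare against beyond noting that your proposal matches the argument in the cited source. One small cosmetic point: with $n=[q]+1$ and $q$ non-integer one has $n-q\in(0,1)$, not $[0,1)$; this does not affect the proof.
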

According to Lemma \ref{lem 2.1}, it follows that
\begin{equation*}
{I^{q}}\ {^{c}D^{q}}x(t)=x(t)+c_{0}+c_{1}t+...+c_{n-1}t^{n-1},
\end{equation*}
for some $c_{i}\in \mathbb{R}$, $i = 0, 1,...,n-1 \ (n = [q] + 1)$.

\begin{lemma}[\cite{Podlubny1999}, \cite{Kilbas2006}]\label{lem 2.2}
If $\beta> \alpha>0$ and $x \in L_{1}[0,1]$, then
\item[(i)] ${^{c}D^{\alpha}}\ {I^{\beta}}x(t)={I^{\beta-\alpha}}x(t)$, holds almost everywhere on $[0,1]$ and it is valid at any point $t\in [0,1]$ if $x\in C[0,1]$;\  ${^{c}D^{\alpha}}\ {I^{\alpha}}x(t)=x(t) $, for all \ $t\in[0,1]$.
\item[(ii)] ${^{c}D^{\alpha}}t^{\lambda-1}=\frac{\Gamma(\lambda)}{\Gamma(\lambda-\alpha)}t^{\lambda-\alpha-1}$,
$\lambda>[\alpha]$ \ and \ ${^{c}D^{\alpha}}t^{\lambda-1}=0$, \  $\lambda<[\alpha]$.
\end{lemma}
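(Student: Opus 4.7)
The plan is to reduce both parts to well-known Riemann--Liouville identities by exploiting the definition $^{c}D^{\alpha}f(t)=I^{n-\alpha}f^{(n)}(t)$ with $n=[\alpha]+1$ from Definition 2.2, together with the semigroup law $I^{a}I^{b}=I^{a+b}$ for the Riemann--Liouville integral, which is a direct Fubini calculation using the Beta function.

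For part (i), I would first use the semigroup property to decompose $I^{\beta}x=I^{\alpha}(I^{\beta-\alpha}x)$, so that proving ${^{c}D^{\alpha}}I^{\beta}x=I^{\beta-\alpha}x$ reduces to proving ${^{c}D^{\alpha}}I^{\alpha}y=y$ with $y:=I^{\beta-\alpha}x$. For this latter identity I would use the fact that when $y\in L_{1}[0,1]$ the function $I^{\alpha}y$ has vanishing values and derivatives at $0$ up to order $n-1$, so the Caputo and Riemann--Liouville derivatives of $I^{\alpha}y$ coincide; then I would invoke the classical left-inverse property ${^{RL}D^{\alpha}}I^{\alpha}y=y$, which itself follows from writing ${^{RL}D^{\alpha}}I^{\alpha}y=\frac{d^{n}}{dt^{n}}I^{n-\alpha}I^{\alpha}y=\frac{d^{n}}{dt^{n}}I^{n}y=y$. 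The continuity upgrade (pointwise equality on $[0,1]$ when $x\in C[0,1]$) follows because both sides of the identity are then continuous functions of $t$.

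For part (ii), I would compute directly. When $\lambda>[\alpha]$, so that $t^{\lambda-1}$ admits $n$ classical derivatives integrable near $0$, one has $(t^{\lambda-1})^{(n)}=\frac{\Gamma(\lambda)}{\Gamma(\lambda-n)}t^{\lambda-n-1}$; applying $I^{n-\alpha}$ and using the elementary Beta-function identity $I^{\gamma}t^{\mu}=\frac{\Gamma(\mu+1)}{\Gamma(\mu+\gamma+1)}t^{\mu+\gamma}$ yields
\[
{^{c}D^{\alpha}}t^{\lambda-1}=\frac{\Gamma(\lambda)}{\Gamma(\lambda-n)}\cdot\frac{\Gamma(\lambda-n)}{\Gamma(\lambda-\alpha)}t^{\lambda-\alpha-1}=\frac{\Gamma(\lambda)}{\Gamma(\lambda-\alpha)}t^{\lambda-\alpha-1}.
\]
The case $\lambda<[\alpha]$ is immediate: $t^{\lambda-1}$ is then a polynomial of degree strictly less than $n$, whose $n$-th derivative vanishes identically, so $^{c}D^{\alpha}t^{\lambda-1}=I^{n-\alpha}0=0$.

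The main obstacle is verifying the integrability and differentiability hypotheses needed to justify the semigroup step and to ensure that $(I^{\alpha}y)^{(n)}$ exists when $x$ is merely in $L_{1}$; these require the weakly singular kernels in the successive convolutions to be handled with Fubini and dominated convergence, which is precisely why the statement only asserts almost-everywhere equality at this level of generality. Once this technical framework is in place, the algebraic content is a routine sequence of Beta--Gamma identities.
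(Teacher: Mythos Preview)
The paper does not prove this lemma at all: it is quoted as a standard result with citations to Podlubny and to Kilbas--Srivastava--Trujillo, so there is no ``paper's own proof'' to compare against. Your sketch is essentially the textbook derivation one finds in those references, and the overall strategy (semigroup law for $I^{a}$, coincidence of Caputo and Riemann--Liouville derivatives on functions with vanishing initial data, direct Beta--Gamma computation on power functions) is correct.

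Two small points are worth tightening. In part~(ii), your sentence ``$t^{\lambda-1}$ is then a polynomial of degree strictly less than $n$'' tacitly assumes $\lambda$ is a positive integer; for non-integer $\lambda<[\alpha]$ the function $t^{\lambda-1}$ is not a polynomial and the Caputo derivative of Definition~2.2 is not even well defined, so the lemma's second clause should be read with $\lambda\in\mathbb{N}$ (which is indeed the convention in the cited books). Also, in the first clause of~(ii) your claim that $(t^{\lambda-1})^{(n)}$ is ``integrable near $0$'' fails in the strip $[\alpha]<\lambda\le[\alpha]+1=n$, since then $\lambda-n-1\le-1$; the formula still holds there, but one must either interpret the Caputo derivative via ${}^{c}D^{\alpha}f={}^{RL}D^{\alpha}\bigl(f-\sum_{k=0}^{n-1}f^{(k)}(0)t^{k}/k!\bigr)$ or argue by analytic continuation in $\lambda$, rather than by the direct $I^{n-\alpha}f^{(n)}$ computation you wrote.
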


\begin{lemma}\label{lem 2.3}
Let $h:[0,1]\longrightarrow\mathbb{R}$ be a continuous function. Then the solution of the linear fractional differential equation
\begin{equation} \label{eq-2.1}
^{c}D^{q}x(t)=h(t), 1<q\leq2,\ t\in[0,1],
\end{equation}
supplemented with boundary conditions \eqref{eq-1.2} is equivalent to the integral equation

\begin{eqnarray} \label{eq-2.3}
x(t)&=&\int_{0}^{t}\frac{(t-s)^{q-1}}{\Gamma(q)}h(s)ds+\frac{1}{\Delta_{2}}\Bigg(-\int_{0}^{1}\frac{(1-s)^{q-1}}{\Gamma(q)}h(s)ds +\sum_{i=1}^{n}\beta_{i}\int_{0}^{\eta_{i}}\frac{(\eta_{i}-s)^{q}}{\Gamma(q+1)}h(s)ds \nonumber\\ &&+\sum_{i=1}^{n}\gamma_{i}\int_{0}^{\eta_{i}}\frac{(\eta_{i}-s)^{q-1}}{\Gamma(q)}h(s)ds\Bigg) +\frac{\Gamma(2-\sigma)\Gamma(2-\nu)\Big(\Delta_{3}+2\Delta_{2}t\Big)}{2\Delta_{1}\Delta_{2}}\Bigg(-\int_{0}^{\xi}\frac{(\xi-s)^{q-\sigma-1}}{\Gamma(q-\sigma)}h(s)ds\nonumber\\
&&+\sum_{i=1}^{n}\alpha_{i}\int_{0}^{\eta_{i}}\frac{(\eta_{i}-s)^{q-\nu-1}}{\Gamma(q-\nu)}h(s)ds \Bigg),
\end{eqnarray}
where
\begin{align} \label{eq-2.4}
\Delta_{1}&=\xi^{1-\sigma}\Gamma(2-\nu)-\Gamma(2-\sigma)\sum_{i=1}^{n}\alpha_{i}\eta_{i}^{1-\nu}\neq0 \nonumber\\
\Delta_{2}&=1-\sum_{i=1}^{n}(\beta_{i}\eta_{i}+\gamma_{i})\neq0\\
\Delta_{3}&=-2+\sum_{i=1}^{n}\eta_{i}(\beta_{i}\eta_{i}+2\gamma_{i}). \nonumber
\end{align}

\end{lemma}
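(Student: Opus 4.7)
The plan is to follow the standard two-step reduction for a Caputo FDE of order $q\in(1,2]$ with two scalar boundary conditions: first reduce \eqref{eq-2.1} to a general solution with two free constants, then pin those constants down by imposing \eqref{eq-1.2}. Applying $I^{q}$ to both sides of \eqref{eq-2.1} and invoking the identity stated after Lemma~\ref{lem 2.1} (with $n=[q]+1=2$) gives the representation
\[
x(t)=I^{q}h(t)-c_{0}-c_{1}t,
\]
for constants $c_{0},c_{1}\in\mathbb{R}$ to be determined from the two boundary conditions.

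For the first boundary condition I would use Lemma~\ref{lem 2.2}(ii), together with the fact that the Caputo derivative of any constant vanishes, to obtain
\[
{}^{c}D^{\sigma}x(t)=I^{q-\sigma}h(t)-\frac{c_{1}\,t^{1-\sigma}}{\Gamma(2-\sigma)},\qquad {}^{c}D^{\nu}x(t)=I^{q-\nu}h(t)-\frac{c_{1}\,t^{1-\nu}}{\Gamma(2-\nu)}.
\]
Note that $c_{0}$ disappears, so the condition ${}^{c}D^{\sigma}x(\xi)=\sum_{i=1}^{n}\alpha_{i}{}^{c}D^{\nu}x(\eta_{i})$ reduces to a single linear equation in $c_{1}$ alone. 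Clearing denominators with $\Gamma(2-\sigma)\Gamma(2-\nu)$, the coefficient of $c_{1}$ on the left-hand side is precisely $\Delta_{1}$, so the hypothesis $\Delta_{1}\neq 0$ lets us solve
\[
c_{1}=\frac{\Gamma(2-\sigma)\Gamma(2-\nu)}{\Delta_{1}}\left[\sum_{i=1}^{n}\alpha_{i}I^{q-\nu}h(\eta_{i})-I^{q-\sigma}h(\xi)\right].
\]

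For the second boundary condition I would first use Fubini to rewrite $\int_{0}^{\eta_{i}}I^{q}h(s)\,ds=I^{q+1}h(\eta_{i})=\int_{0}^{\eta_{i}}(\eta_{i}-s)^{q}h(s)/\Gamma(q+1)\,ds$, and then expand $x(1)=\sum_{i=1}^{n}\beta_{i}\int_{0}^{\eta_{i}}x(s)\,ds+\sum_{i=1}^{n}\gamma_{i}x(\eta_{i})$ term by term. Collecting coefficients, the $c_{0}$-coefficient simplifies to $-\Delta_{2}$ and the $c_{1}$-coefficient to $\Delta_{3}/2$, so the hypothesis $\Delta_{2}\neq 0$ lets us solve for $c_{0}$ in terms of the already-known $c_{1}$ and the integrals of $h$. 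Substituting the explicit forms of $c_{0}$ and $c_{1}$ back into $x(t)=I^{q}h(t)-c_{0}-c_{1}t$ and pulling the common factor $(\Delta_{3}+2\Delta_{2}t)/(2\Delta_{2})$ out of the pieces multiplying $c_{1}$ should reproduce \eqref{eq-2.3} exactly.

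The main obstacle I anticipate is purely bookkeeping: identifying the expression $-1+\sum_{i=1}^{n}(\beta_{i}\eta_{i}+\gamma_{i})$ as $-\Delta_{2}$ and $-1+\sum_{i=1}^{n}(\beta_{i}\eta_{i}^{2}/2+\gamma_{i}\eta_{i})$ as $\Delta_{3}/2$, while keeping all signs consistent so that the combined factor $(\Delta_{3}+2\Delta_{2}t)$ emerges cleanly. Beyond this algebra the argument is a routine linear solve, legitimized by the nondegeneracy assumptions $\Delta_{1}\Delta_{2}\neq 0$; the converse direction (verifying that the right-hand side of \eqref{eq-2.3} satisfies \eqref{eq-2.1} and \eqref{eq-1.2}) follows from Lemma~\ref{lem 2.2}(i) applied term by term.
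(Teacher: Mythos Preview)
Your proposal is correct and follows essentially the same route as the paper: write the general solution as $I^{q}h$ plus an affine function, use the first boundary condition (which kills the constant term under the Caputo derivative) to solve for the linear coefficient via $\Delta_{1}\neq 0$, then use the second boundary condition to solve for the remaining constant via $\Delta_{2}\neq 0$, and substitute back. The only discrepancy is a harmless sign convention (you take $x=I^{q}h-c_{0}-c_{1}t$ while the paper takes $+c_{0}+c_{1}t$), and with your convention the displayed formula for $c_{1}$ should carry an overall minus sign---exactly the bookkeeping hazard you anticipated, easily caught when you substitute back.
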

\begin{proof}
From Lemma \ref{lem 2.1}, we may reduce \eqref{eq-2.1} to an equivalent integral equation
\begin{equation} \label{eq-2.5}
x(t)=I^{q}h(t)+c_{0}+c_{1}t,
\end{equation}
where $c_{0}, \ c_{1}\in \mathbb{R}$ are arbitrary constants. Consequently, the general solution of equation \eqref{eq-2.1} is
\begin{equation} \label{eq-2.6}
x(t)=\frac{1}{\Gamma(q)}\int_{0}^{t}(t-s)^{q-1}h(s)ds+c_{0}+c_{1}t,
\end{equation}
and
\begin{eqnarray} \label{eq-2.7}
\int_{0}^{\eta_{i}}x(s)ds&=&\int_{0}^{\eta_{i}}\Bigg(\int_{0}^{s}\frac{(s-\tau)^{q-1}}{\Gamma(q)}h(\tau)d\tau\Bigg)ds+c_{0}\eta_{i}
+c_{1}\frac{\eta_{i}^{2}}{2} \nonumber\\
&=&\int_{0}^{\eta_{i}}\frac{(\eta_{i}-s)^{q}}{\Gamma(q+1)}h(s)ds+c_{0}\eta_{i}+c_{1}\frac{\eta_{i}^{2}}{2},\ i=1,2,...,n.
\end{eqnarray}
Now, in view of Lemma \ref{lem 2.2}, by taking the Caputo fractional derivative of order $\nu$ and $\sigma$ to both sides of \eqref{eq-2.6}, we get
\begin{align}
^{c}D^{\nu}x(\eta_{i})&=\int_{0}^{\eta_{i}}\frac{(\eta_{i}-s)^{q-\nu-1}}{\Gamma(q-\nu)}h(s)ds+
c_{1}\frac{\eta_{i}^{1-\nu}}{\Gamma(2-\nu)},\ i=1,2,...,n.\nonumber\\
^{c}D^{\sigma}x(\xi)&=\int_{0}^{\xi}\frac{(\xi-s)^{q-\sigma-1}}{\Gamma(q-\sigma)}h(s)ds+
c_{1}\frac{\xi^{1-\sigma}}{\Gamma(2-\sigma)}\nonumber.
\end{align}

The boundary condition $^{c}D^{\sigma}x(\xi)=\sum_{i=1}^{n}\alpha_{i}{^{c}D^{\nu}}x(\eta_{i})$ implies that
\begin{equation} \label{eq-2.8}
\int_{0}^{\xi}\frac{(\xi-s)^{q-\sigma-1}}{\Gamma(q-\sigma)}h(s)ds+
c_{1}\frac{\xi^{1-\sigma}}{\Gamma(2-\sigma)}\\
=\sum_{i=1}^{n}\alpha_{i}\int_{0}^{\eta_{i}}\frac{(\eta_{i}-s)^{q-\nu-1}}{\Gamma(q-\nu)}h(s)ds+
\frac{c_{1}}{\Gamma(2-\nu)}\sum_{i=1}^{n}\alpha_{i}\eta_{i}^{1-\nu}.
\end{equation}

which, on solving, yields

\begin{equation*}
c_{1}=\frac{\Gamma(2-\sigma)\Gamma(2-\nu)}{\Delta_{1}}\Bigg(-\int_{0}^{\xi}\frac{(\xi-s)^{q-\sigma-1}}{\Gamma(q-\sigma)}h(s)ds+\sum_{i=1}^{n}\alpha_{i}\int_{0}^{\eta_{i}}\frac{(\eta_{i}-s)^{q-\nu-1}}
{\Gamma(q-\nu)}h(s)ds\Bigg).
\end{equation*}

The second condition of \eqref{eq-1.1} implies that
\begin{eqnarray}\label{eq-2.8}
\int_{0}^{1}\frac{(1-s)^{q-1}}{\Gamma(q)}h(s)ds+c_{0}+c_{1}&=&\sum_{i=1}^{n}\beta_{i}\int_{0}^{\eta_{i}}\frac{(\eta_{i}-s)^{q}}
{\Gamma(q+1)}h(s)ds+c_{0}\sum_{i=1}^{n}\beta_{i}\eta_{i}+\frac{c_{1}}{2}\sum_{i=1}^{n}\beta_{i}\eta_{i}^{2} +c_{0}\sum_{i=1}^{n}\gamma_{i} \nonumber \\
&&+c_{1}\sum_{i=1}^{n}\gamma_{i}\eta_{i}+\sum_{i=1}^{n}\gamma_{i}\int_{0}^{\eta_{i}}\frac{(\eta_{i}-s)^{q-1}}
{\Gamma(q)}h(s)ds.
\end{eqnarray}

which, on inserting the value of $c_{1}$ in \eqref{eq-2.8}, we obtain

\begin{eqnarray}
c_{0} & = &\frac{1}{\Delta_{2}}\Bigg(-\int_{0}^{1}\frac{(1-s)^{q-1}}{\Gamma(q)}h(s)ds+\sum_{i=1}^{n}\beta_{i}\int_{0}^{\eta_{i}}\frac{(\eta_{i}-s)^{q}}
{\Gamma(q+1)}h(s)ds+ \sum_{i=1}^{n}\gamma_{i}\int_{0}^{\eta_{i}}\frac{(\eta_{i}-s)^{q-1}}
{\Gamma(q)}h(s)ds\Bigg)\nonumber \\
&&+\frac{\Delta_{3}\Gamma(2-\sigma)\Gamma(2-\nu)}{2\Delta_{1}\Delta_{2}}\Bigg(-\int_{0}^{\xi}\frac{(\xi-s)^{q-\sigma-1}}{\Gamma(q-\sigma)}h(s)ds
+\sum_{i=1}^{n}\alpha_{i}\int_{0}^{\eta_{i}}\frac{(\eta_{i}-s)^{q-\nu-1}}
{\Gamma(q-\nu)}h(s)ds\Bigg).
\end{eqnarray}

Substituting the values of $c_{0}$ and $c_{1}$ in \eqref{eq-2.6} we obtain the solution \eqref{eq-2.3}. This completes the proof.
\end{proof}

\section{Existences results}\label{sec3}

In this section, we establish sufficient conditions for the existence of solutions to the fractional order boundary value problem \eqref{eq-1.1}-\eqref{eq-1.2} using certain fixed point theorems.

Let $\mathcal{C}=C([0,1],\mathbb{R})$ be the Banach space of all continuous functions from $[0,1]$ into $\mathbb{R}$ endowed with the norm: $\|x\|=\sup\{|x(t)|, t\in[0,1 ]\}$. In view of Lemma \ref{lem 2.3}, we define an operator $\mathcal{A} : \mathcal{C}\rightarrow\mathcal{C}$ by
\begin{eqnarray} \label{eq-3.1}
(\mathcal{A}x)(t)&=&\int_{0}^{t}\frac{(t-s)^{q-1}}{\Gamma(q)}f(s,x(s))ds+\frac{1}{\Delta_{2}}\Bigg(-\int_{0}^{1}\frac{(1-s)^{q-1}}{\Gamma(q)}f(s,x(s))ds+\sum_{i=1}^{n}\beta_{i}\int_{0}^{\eta_{i}}\frac{(\eta_{i}-s)^{q}}{\Gamma(q+1)}f(s,x(s))ds \nonumber\\
&&+\sum_{i=1}^{n}\gamma_{i}\int_{0}^{\eta_{i}}\frac{(\eta_{i}-s)^{q-1}}{\Gamma(q)}f(s,x(s))ds\Bigg)
+\frac{\Gamma(2-\sigma)\Gamma(2-\nu)\Big(\Delta_{3}+2\Delta_{2}t\Big)}{2\Delta_{1}\Delta_{2}} \nonumber\\
&&\times\Bigg(-\int_{0}^{\xi}\frac{(\xi-s)^{q-\sigma-1}}{\Gamma(q-\sigma)}f(s,x(s))ds+\sum_{i=1}^{n}\alpha_{i}\int_{0}^{\eta_{i}}\frac{(\eta_{i}-s)^{q-\nu-1}}{\Gamma(q-\nu)}f(s,x(s))ds \Bigg),
\end{eqnarray}
with $\Delta_{1}\neq0$ and $\Delta_{2}\neq0$, defined by \eqref{eq-2.4}. Clearly, $x$ is a solution of problem \eqref{eq-1.1}-\eqref{eq-1.2} if and only if $x$ is a fixed point of the operator $\mathcal{A}$.

For the sake of convenience, in the sequel we set

\begin{align}
 \begin{split}
\Theta&=\frac{1}{\Gamma(q+1)}+\frac{1}{|\Delta_{2}|\Gamma(q+2)}\Bigg(1+q+\sum_{i=1}^{n}\eta_{i}^{q}\Big(\eta_{i}|\beta_{i}|+(q+1)|\gamma_{i}|\Big)\Bigg) \label{eq-3.2}\\
&\quad +\frac{\Gamma(2-\sigma)\Gamma(2-\nu)\Big(|\Delta_{3}|+2|\Delta_{2}|\Big)}{2|\Delta_{1}\Delta_{2}|}\Bigg(\frac{\xi^{q-\sigma}}{\Gamma(q-\sigma+1)}+
\sum_{i=1}^{n}|\alpha_{i}|\frac{\eta_{i}^{q-\nu}}{\Gamma(q-\nu+1)}\Bigg)
   \end{split} \\
\Omega&=\frac{1}{\Gamma(q)}+\frac{\Gamma(2-\sigma)\Gamma(2-\nu)}{|\Delta_{1}|}\Bigg(\frac{\xi^{q-\sigma}}{\Gamma(q-\sigma+1)}
+\sum_{i=1}^{n}|\alpha_{i}|\frac{\eta_{i}^{q-\nu}}{\Gamma(q-\nu+1)}\Bigg)\label{eq-3.3}
\end{align}

Now we prove an existence and uniqueness result via Banach's fixed point theorem.

\begin{theorem} \label{thm 3.1}
Let $f:[0,1]\times\mathbb{R}\longrightarrow\mathbb{R}$ be a continuous function satisfying the Lipschitz
condition:\\
$(H_{1})$\ there exists a constant $L>0$ such that $|f(t,x)-f(t,y)|\leq L|x-y|$, for each $t\in[0, 1]$
and $x,y\in \mathbb{R}$.\\
Then the fractional boundary value problem \eqref{eq-1.1}-\eqref{eq-1.2} has a unique solution on $[0,1]$ if
\begin{equation}\label{eq-3.4}
L \Theta<1,
\end{equation}
where $\Theta$ is given by \eqref{eq-3.2},
\end{theorem}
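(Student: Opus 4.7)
The plan is to apply Banach's contraction principle directly to the integral operator $\mathcal{A}$ defined by (3.1), whose fixed points coincide (by Lemma \ref{lem 2.3}) with solutions of \eqref{eq-1.1}--\eqref{eq-1.2}. Since $f$ is continuous and each kernel appearing in (3.1) is integrable (note $q-\sigma>0$ and $q-\nu>0$ because $q>1\geq \sigma,\nu$), a standard dominated-convergence argument shows that $\mathcal{A}$ is a well-defined self-map of the Banach space $\mathcal{C}$. It therefore suffices to establish the contraction estimate $\|\mathcal{A}x-\mathcal{A}y\|\leq L\Theta\|x-y\|$ and then invoke the hypothesis $L\Theta<1$.

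For the main estimate I would fix $x,y\in\mathcal{C}$, write $(\mathcal{A}x)(t)-(\mathcal{A}y)(t)$ by linearity as a sum of integrals of $f(s,x(s))-f(s,y(s))$ against the various kernels in (3.1), and use $(H_1)$ to factor out $L\|x-y\|$ from every integral. The remaining integrals are all of Beta-function type and are either evaluated exactly or bounded on $[0,1]$ by the standard identities $\int_0^t (t-s)^{q-1}ds/\Gamma(q)=t^q/\Gamma(q+1)\leq 1/\Gamma(q+1)$, $\int_0^{\eta_i}(\eta_i-s)^q ds/\Gamma(q+1)=\eta_i^{q+1}/\Gamma(q+2)$, $\int_0^{\eta_i}(\eta_i-s)^{q-1}ds/\Gamma(q)=\eta_i^q/\Gamma(q+1)$, $\int_0^{\xi}(\xi-s)^{q-\sigma-1}ds/\Gamma(q-\sigma)=\xi^{q-\sigma}/\Gamma(q-\sigma+1)$, and $\int_0^{\eta_i}(\eta_i-s)^{q-\nu-1}ds/\Gamma(q-\nu)=\eta_i^{q-\nu}/\Gamma(q-\nu+1)$. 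The $t$-dependent prefactor multiplying the last bracket is controlled by $|\Delta_3+2\Delta_2 t|\leq |\Delta_3|+2|\Delta_2|$ on $[0,1]$.

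Collecting these bounds, taking the supremum over $t\in[0,1]$, and reorganizing the sum---in particular using $1/\Gamma(q+1)=(q+1)/\Gamma(q+2)$ to merge the terms carrying the factor $1/|\Delta_2|$ into the compact grouping $\frac{1}{|\Delta_2|\Gamma(q+2)}\bigl(1+q+\sum_{i=1}^n\eta_i^q(\eta_i|\beta_i|+(q+1)|\gamma_i|)\bigr)$---reproduces exactly the constant $\Theta$ defined in \eqref{eq-3.2}. This gives $\|\mathcal{A}x-\mathcal{A}y\|\leq L\Theta\|x-y\|$, so by \eqref{eq-3.4} the map $\mathcal{A}$ is a strict contraction on the complete metric space $\mathcal{C}$, and Banach's fixed point theorem produces the unique fixed point, i.e., the unique solution of \eqref{eq-1.1}--\eqref{eq-1.2} on $[0,1]$.

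The argument poses no serious analytical obstacle; the main care required is algebraic bookkeeping---matching the six integral contributions against the three groupings in \eqref{eq-3.2}, and keeping absolute-value signs on $\Delta_1,\Delta_2,\Delta_3,\alpha_i,\beta_i,\gamma_i$, none of which are assumed to have a prescribed sign.
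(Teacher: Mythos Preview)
Your proposal is correct and follows essentially the same approach as the paper: both apply Banach's contraction principle to the operator $\mathcal{A}$ and derive the bound $\|\mathcal{A}x-\mathcal{A}y\|\leq L\Theta\|x-y\|$ via the identical kernel estimates and the bound $|\Delta_3+2\Delta_2 t|\leq |\Delta_3|+2|\Delta_2|$. The only difference is that the paper first verifies $\mathcal{A}B_\rho\subset B_\rho$ for a suitable ball, but since the contraction estimate is established on all of $\mathcal{C}$ this step is not needed, and your streamlined version is entirely adequate.
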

\begin{proof}
We transform the problem \eqref{eq-1.1}-\eqref{eq-1.2} into a fixed point problem $x=\mathcal{A}x$, where the operator $\mathcal{A}$ is defined by \eqref{eq-3.1}. Applying Banach's contraction mapping principle, we shall show that $\mathcal{A}$ has a unique fixed point.
\newline
Setting $\sup\{|f(t,0)|, t\in[0,1]\}=M<\infty$ and choose a constant $\rho>0$ satisfying
\[\rho\geq \Theta{M}(1-\Theta{L})^{-1}.\]

First, we will show that $\mathcal{A}B_{\rho}\subset B_{\rho}$, where  $B_{\rho}=\{x\in \mathcal{C}:\|x\|\leq \rho\}$.
From $(H_{1})$, for any $x\in B_{\rho}$, and $t\in[0,1]$, we get
\begin{gather} \label{eq-3.5}
\begin{aligned}
|f(t,x(t))|& \leq |f(t,x(t))-f(t,0)|+|f(t,0)| \\
&\leq L \|x\|+M\\
&\leq L {\rho}+M.
\end{aligned}
\end{gather}
Using \eqref{eq-3.1}, \eqref{eq-3.5} and \eqref{eq-3.2}, we obtain
\begin{eqnarray*}
|(\mathcal{A}x)(t)|&\leq&\int_{0}^{t}\frac{(t-s)^{q-1}}{\Gamma(q)}|f(s,x(s))|ds+\frac{1}{|\Delta_{2}|}
\Bigg(\int_{0}^{1}\frac{(1-s)^{q-1}}{\Gamma(q)}|f(s,x(s))|ds\\
&&+\sum_{i=1}^{n}|\beta_{i}|\int_{0}^{\eta_{i}}\frac{(\eta_{i}-s)^{q}}{\Gamma(q+1)}|f(s,x(s))|ds+\sum_{i=1}^{n}|\gamma_{i}|\int_{0}^{\eta_{i}}\frac{(\eta_{i}-s)^{q-1}}{\Gamma(q)}|f(s,x(s))|ds
\Bigg) \nonumber\\
&&+\frac{\Gamma(2-\sigma)\Gamma(2-\nu)\Big(|\Delta_{3}|+2|\Delta_{2}|\Big)}{2|\Delta_{1}\Delta_{2}|}\Bigg(\int_{0}^{\xi}\frac{(\xi-s)^{q-\sigma-1}}{\Gamma(q-\sigma)}|f(s,x(s))|ds\nonumber \\
&&+\sum_{i=1}^{n}|\alpha_{i}|\int_{0}^{\eta_{i}}\frac{(\eta_{i}-s)^{q-\nu-1}}{\Gamma(q-\nu)}|f(s,x(s))|ds \Bigg)\\
&\leq&\Bigg\{\int_{0}^{t}\frac{(t-s)^{q-1}}{\Gamma(q)}ds+\frac{1}{|\Delta_{2}|}
\Bigg(\int_{0}^{1}\frac{(1-s)^{q-1}}{\Gamma(q)}ds \nonumber \\
&&+\sum_{i=1}^{n}|\beta_{i}|\int_{0}^{\eta_{i}}\frac{(\eta_{i}-s)^{q}}{\Gamma(q+1)}ds+\sum_{i=1}^{n}|\gamma_{i}|\int_{0}^{\eta_{i}}\frac{(\eta_{i}-s)^{q-1}}{\Gamma(q)}ds\Bigg)\\ &&+\frac{\Gamma(2-\sigma)\Gamma(2-\nu)\Big(|\Delta_{3}|+2|\Delta_{2}|\Big)}{2|\Delta_{1}\Delta_{2}|}\Bigg(\int_{0}^{\xi}\frac{(\xi-s)^{q-\sigma-1}}{\Gamma(q-\sigma)}ds\nonumber \end{eqnarray*}
\begin{eqnarray*}
&&+\sum_{i=1}^{n}|\alpha_{i}|\int_{0}^{\eta_{i}}\frac{(\eta_{i}-s)^{q-\nu-1}}{\Gamma(q-\nu)}ds \Bigg)\Bigg\}\times\big(L\|x\|+M\big)\\
&=&\Bigg\{\frac{t^{q}}{\Gamma(q+1)}+\frac{1}{|\Delta_{2}|}\Bigg(\frac{1}{\Gamma(q+1)}+\sum_{i=1}^{n}|\beta_{i}|\frac{\eta_{i}^{q+1}}{\Gamma(q+2)}
+\sum_{i=1}^{n}|\gamma_{i}|\frac{\eta_{i}^{q}}{\Gamma(q+1)}\Bigg)\nonumber\\
&&+\frac{\Gamma(2-\sigma)\Gamma(2-\nu)\Big(|\Delta_{3}|+2|\Delta_{2}|\Big)}{2|\Delta_{1}\Delta_{2}|}\Bigg(\frac{\xi^{q-\sigma}}{\Gamma(q-\sigma+1)}\nonumber \\
&&+\sum_{i=1}^{n}|\alpha_{i}|\frac{\eta_{i}^{q-\nu}}{\Gamma(q-\nu+1)} \Bigg)\Bigg\}\times\big(L\|x\|+M\big)\\
&\leq&\Bigg\{\frac{1}{\Gamma(q+1)}+\frac{1}{|\Delta_{2}|\Gamma(q+2)}\Bigg(1+q+\sum_{i=1}^{n}\eta_{i}^{q}\bigg(\eta_{i}|\beta_{i}|+(q+1)|\gamma_{i}|\bigg)\Bigg)\nonumber\\
&&+\frac{\Gamma(2-\sigma)\Gamma(2-\nu)\Big(|\Delta_{3}|+2|\Delta_{2}|\Big)}{2|\Delta_{1}\Delta_{2}|}\Bigg(\frac{\xi^{q-\sigma}}{\Gamma(q-\sigma+1)}\nonumber \\
&&+\sum_{i=1}^{n}|\alpha_{i}|\frac{\eta_{i}^{q-\nu}}{\Gamma(q-\nu+1)} \Bigg)\Bigg\}\times\big(L\|x\|+M\big)\\
&=&(L{\rho}+M)\Theta \leq\rho,
\end{eqnarray*}
which means that $\|\mathcal{A}x\|\leq \rho$. Therefore, we have, $\mathcal{A}B_{\rho}\subset B_{\rho}$.

Now, for  $x,y\in \mathcal{C}$, and for each $t\in[0,1]$, we have

\begin{eqnarray*}
|(\mathcal{A}x)(t)-(\mathcal{A}y)(t)|&\leq&\int_{0}^{t}\frac{(t-s)^{q-1}}{\Gamma(q)}|f(s,x(s))-f(s,y(s))|ds\nonumber\\
&&+\frac{1}{|\Delta_{2}|}\Bigg(\int_{0}^{1}\frac{(1-s)^{q-1}}{\Gamma(q)}|f(s,x(s))-f(s,y(s))|ds\\
&&+\sum_{i=1}^{n}|\beta_{i}|\int_{0}^{\eta_{i}}\frac{(\eta_{i}-s)^{q}}{\Gamma(q+1)}|f(s,x(s))-f(s,y(s))|ds \nonumber\\
&&+\sum_{i=1}^{n}|\gamma_{i}|\int_{0}^{\eta_{i}}\frac{(\eta_{i}-s)^{q-1}}{\Gamma(q)}|f(s,x(s))-f(s,y(s))|ds \Bigg)\\
&&+\frac{\Gamma(2-\sigma)\Gamma(2-\nu)\Big(|\Delta_{3}|+2|\Delta_{2}|\Big)}{2|\Delta_{1}\Delta_{2}|} \nonumber\\
&&\times \Bigg(\int_{0}^{\xi}\frac{(\xi-s)^{q-\sigma-1}}{\Gamma(q-\sigma)}|f(s,x(s))-f(s,y(s))|ds\nonumber \\
&&+\sum_{i=1}^{n}|\alpha_{i}|\int_{0}^{\eta_{i}}\frac{(\eta_{i}-s)^{q-\nu-1}}{\Gamma(q-\nu)}|f(s,x(s))-f(s,y(s))|ds \Bigg)\\
&\leq& L\|x-y\|\Bigg\{\frac{1}{\Gamma(q+1)}+\frac{1}{|\Delta_{2}|\Gamma(q+2)}\Bigg(1+q+\sum_{i=1}^{n}\eta_{i}^{q}\bigg(\eta_{i}|\beta_{i}|
+(q+1)|\gamma_{i}|\bigg)\Bigg)\\
&&+\frac{\Gamma(2-\sigma)\Gamma(2-\nu)\Big(|\Delta_{3}|+2|\Delta_{2}|\Big)}{2|\Delta_{1}\Delta_{2}|}
\times \Bigg(\frac{\xi^{q-\sigma}}{\Gamma(q-\sigma+1)}+\sum_{i=1}^{n}|\alpha_{i}|\frac{\eta_{i}^{q-\nu}}{\Gamma(q-\nu+1)} \Bigg)\Bigg\}\\
&=&L\Theta \|x-y\|.
\end{eqnarray*}
Consequently, $ \|\mathcal{A}x-\mathcal{A}y\|\leq L \Theta \|x-y\|$. As $L\Theta<1$, it follows that the operator $\mathcal{A}$ is a contraction. By the Banach's contraction mapping principle, $\mathcal{A}$ has a fixed point in $B_{\rho}$ which is the unique solution of the problem \eqref{eq-1.1}-\eqref{eq-1.2} on $[0,1]$. This completes the proof.
\end{proof}

Next, we give a second existence and uniqueness result based on nonlinear contractions.

\begin{definition} \label{def 3.1}
Let $E$ be a Banach space and let $A:E\rightarrow E$ be a mapping. $A$ is said to be a nonlinear contraction if there exists a continuous nondecreasing function $\psi:\mathbb{R}^{+}\rightarrow \mathbb{R}^{+}$ such that $\psi(0)=0$ and $\psi(\alpha)<\alpha$ for all $\alpha>0$ with the property:
\[\|Ax-Ay\|\leq\psi(\|x-y\|),\ \forall x,y \in E.\]
\end{definition}

\begin{lemma} [Boyd and Wong\cite{Boyd1969}]\label{lem 3.1}
Let $E$ be a Banach space and let $A:E\rightarrow E$ be a nonlinear contraction. Then, $A$ has a unique fixed point in $E $.
\end{lemma}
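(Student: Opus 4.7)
The plan is to establish this classical Boyd--Wong result by a Picard-type iteration, mimicking the standard proof of Banach's principle but using the scalar inequality $\psi(\alpha)<\alpha$ in place of a fixed Lipschitz constant. Uniqueness is immediate: if $x,y$ were two distinct fixed points, then $\|x-y\|=\|Ax-Ay\|\le\psi(\|x-y\|)<\|x-y\|$, a contradiction. So the real work is in producing a fixed point.

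Starting from an arbitrary $x_{0}\in E$, I would define $x_{n+1}=Ax_{n}$ and set $c_{n}=\|x_{n+1}-x_{n}\|$. The nonlinear contraction property gives $c_{n+1}\le \psi(c_{n})$, and since $\psi$ is nondecreasing with $\psi(\alpha)<\alpha$ on $(0,\infty)$, the sequence $(c_{n})$ is nonincreasing and bounded below by $0$. Writing $c=\lim c_{n}\ge 0$ and invoking continuity of $\psi$, passing to the limit yields $c\le\psi(c)$, which forces $c=0$.

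The main obstacle is to upgrade ``consecutive terms become close'' into genuine Cauchyness of $(x_{n})$, because without a uniform contraction constant I cannot sum a geometric tail. I would argue by contradiction: assume there exist $\varepsilon>0$ and indices $m_{k}>n_{k}\to\infty$ with $\|x_{m_{k}}-x_{n_{k}}\|\ge \varepsilon$, choosing $m_{k}$ minimal with this property so that $\|x_{m_{k}-1}-x_{n_{k}}\|<\varepsilon$. A triangle inequality sandwiches $\|x_{m_{k}}-x_{n_{k}}\|$ between $\varepsilon$ and $\varepsilon+c_{m_{k}-1}$, so it converges to $\varepsilon$. Applying the contraction inequality one more time,
\[
\|x_{m_{k}}-x_{n_{k}}\|\le c_{m_{k}}+\psi(\|x_{m_{k}}-x_{n_{k}}\|)+c_{n_{k}},
\]
and taking $k\to\infty$ (using $c_{n}\to 0$ and continuity of $\psi$) produces $\varepsilon\le\psi(\varepsilon)$, contradicting $\psi(\varepsilon)<\varepsilon$.

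Once $(x_{n})$ is Cauchy, completeness of $E$ delivers a limit $x^{\ast}\in E$. The estimate $\|Ax^{\ast}-Ax_{n}\|\le\psi(\|x^{\ast}-x_{n}\|)\to\psi(0)=0$ shows that $A$ is continuous at $x^{\ast}$, so letting $n\to\infty$ in $x_{n+1}=Ax_{n}$ gives $x^{\ast}=Ax^{\ast}$. Combined with the earlier uniqueness argument, this completes the proof.
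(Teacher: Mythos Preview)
Your argument is correct and is essentially the classical Boyd--Wong proof: uniqueness by the strict inequality $\psi(\alpha)<\alpha$, convergence of successive differences via monotonicity and continuity of $\psi$, the ``minimal index'' contradiction to get Cauchyness, and then passage to the limit. All steps are sound under the paper's Definition~\ref{def 3.1}, which assumes $\psi$ continuous (rather than merely upper semicontinuous from the right as in Boyd and Wong's original statement), so your limit passages $c\le\psi(c)$ and $\varepsilon\le\psi(\varepsilon)$ are justified as written.

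There is nothing to compare against in the paper itself: Lemma~\ref{lem 3.1} is stated with a citation to \cite{Boyd1969} and used as a black box in the proof of Theorem~\ref{thm 3.2}; the authors do not supply their own proof. Your write-up therefore goes beyond what the paper does, and it does so correctly.
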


\begin{theorem} \label{thm 3.2}
Let $f:[0,1]\times\mathbb{R}\longrightarrow\mathbb{R}$ be a continuous function satisfying the assumption:
\newline
$(H_{2})$ $|f(t,x)-f(t,y)|\leq g(t)\Phi^{-1} \ln(1+|x-y|)$, for all $t\in[0, 1]$
and $x,y\in \mathbb{R}$, where $g:[0,1]\rightarrow \mathbb{R}^{+}$ is continuous and the positive constant $\Phi$ is defined by
\begin{eqnarray} \label{eq-3.4}
\Phi&=&\Big(1+\frac{1}{|\Delta_{2}|}\Big)\int_{0}^{1}\frac{(1-s)^{q-1}}{\Gamma(q)}g(s)ds
+\frac{1}{|\Delta_{2}|}\sum_{i=1}^{n}\int_{0}^{\eta_{i}}\frac{(\eta_{i}-s)^{q-1}}{\Gamma(q+1)}
\Big(q|\gamma_{i}|+|\beta_{i}|(\eta_{i}-s)\Big)g(s)ds\nonumber\\ &&+\frac{\Gamma(2-\sigma)\Gamma(2-\nu)\Big(|\Delta_{3}|+2|\Delta_{2}|\Big)}{2|\Delta_{1}\Delta_{2}|}\Bigg(
\int_{0}^{\xi}\frac{(\xi-s)^{q-\sigma-1}}{\Gamma(q-\sigma)}g(s)ds+\sum_{i=1}^{n}|\alpha_{i}|\int_{0}^{\eta_{i}}\frac{(\eta_{i}-s)^{q-\nu-1}}{\Gamma(q-\nu)}g(s)ds
\Bigg)\nonumber.
\end{eqnarray}
Then the fractional boundary value problem \eqref{eq-1.1}-\eqref{eq-1.2} has a unique solution on $[0,1]$.
\end{theorem}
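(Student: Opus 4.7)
The strategy is to apply the Boyd--Wong nonlinear contraction principle (Lemma \ref{lem 3.1}) to the operator $\mathcal{A}$ introduced in \eqref{eq-3.1}, whose fixed points coincide with solutions of \eqref{eq-1.1}-\eqref{eq-1.2} by Lemma \ref{lem 2.3}. The natural candidate modulus function here is
\[\psi(\alpha)=\ln(1+\alpha),\qquad \alpha\geq 0,\]
which is continuous and nondecreasing, satisfies $\psi(0)=0$, and obeys $\psi(\alpha)<\alpha$ for every $\alpha>0$ (the elementary inequality $\ln(1+\alpha)<\alpha$). So the task reduces to establishing the bound $\|\mathcal{A}x-\mathcal{A}y\|\leq \psi(\|x-y\|)$ and invoking Lemma \ref{lem 3.1}.

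To get that bound, I fix $x,y\in\mathcal{C}$ and $t\in[0,1]$. Since $\ln(1+\cdot)$ is nondecreasing and $|x(s)-y(s)|\leq\|x-y\|$, hypothesis $(H_2)$ gives $|f(s,x(s))-f(s,y(s))|\leq g(s)\,\Phi^{-1}\ln(1+\|x-y\|)$. I would substitute this into the expression for $(\mathcal{A}x)(t)-(\mathcal{A}y)(t)$ obtained from \eqref{eq-3.1}, pass the absolute value inside each integral, bound $|\Delta_3+2\Delta_2 t|\leq |\Delta_3|+2|\Delta_2|$, and use $(t-s)^{q-1}\leq(1-s)^{q-1}$ on $[0,t]$ (valid because $q>1$ and $0\leq s\leq t\leq 1$). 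The scalar factor $\Phi^{-1}\ln(1+\|x-y\|)$ then factors out of every term, leaving a $t$-independent sum of integrals of $g$.

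The key algebraic point is that this residual sum equals exactly the constant $\Phi$ in the statement. The first and third blocks of $\Phi$ match the corresponding groups of integrals directly; the one merge requiring care is the $\beta_i,\gamma_i$ block, where $\Gamma(q+1)=q\Gamma(q)$ lets one rewrite
\[\sum_{i=1}^n|\beta_i|\int_0^{\eta_i}\frac{(\eta_i-s)^q}{\Gamma(q+1)}g(s)\,ds+\sum_{i=1}^n|\gamma_i|\int_0^{\eta_i}\frac{(\eta_i-s)^{q-1}}{\Gamma(q)}g(s)\,ds\]
as $\sum_{i=1}^n\int_0^{\eta_i}\frac{(\eta_i-s)^{q-1}}{\Gamma(q+1)}\bigl(|\beta_i|(\eta_i-s)+q|\gamma_i|\bigr)g(s)\,ds$, which is precisely the $|\Delta_2|^{-1}$ block of $\Phi$. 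Consequently $|(\mathcal{A}x)(t)-(\mathcal{A}y)(t)|\leq \Phi\cdot\Phi^{-1}\ln(1+\|x-y\|)=\ln(1+\|x-y\|)$, and taking the supremum over $t\in[0,1]$ yields $\|\mathcal{A}x-\mathcal{A}y\|\leq\psi(\|x-y\|)$. By Lemma \ref{lem 3.1}, $\mathcal{A}$ has a unique fixed point, which is the unique solution of \eqref{eq-1.1}-\eqref{eq-1.2} on $[0,1]$. The only delicate step in the whole argument is the bookkeeping that identifies the residual integral expression with $\Phi$ exactly; everything else is a clean application of Boyd--Wong.
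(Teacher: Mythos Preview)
Your proposal is correct and follows essentially the same route as the paper: apply Boyd--Wong with $\psi(\alpha)=\ln(1+\alpha)$, use $(H_2)$ together with monotonicity of $\ln(1+\cdot)$ to pull out the factor $\Phi^{-1}\psi(\|x-y\|)$, bound the first integral via $(t-s)^{q-1}\leq(1-s)^{q-1}$ and extension of the integration range to $[0,1]$, and then verify that the remaining sum of $g$-integrals equals $\Phi$ (including the $\Gamma(q+1)=q\Gamma(q)$ merge you noted). The paper's proof differs only in presentation, writing out the intermediate estimates more explicitly before arriving at the same cancellation $\Phi^{-1}\Phi=1$.
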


\begin{proof}
Let the operator $\mathcal{A} : \mathcal{C}\rightarrow\mathcal{C}$ be defined as in \eqref{eq-3.1}. Consider the continuous non decreasing function $\psi:\mathbb{R}^{+}\rightarrow \mathbb{R}^{+}$ defined by
\[\psi(\alpha)=\ln(\alpha+1),\ \forall\alpha\geq0.\]
Clearly, the function $\psi$ satisfies $\psi(0)=0$ and $\psi(\alpha)<\alpha$, for all $\alpha>0$.

For any $x,y\in \mathcal{C}$ and for each $t\in [0,1]$, we have
\begin{eqnarray*}
|(\mathcal{A}x)(t)-(\mathcal{A}y)(t)|&\leq&\int_{0}^{t}\frac{(t-s)^{q-1}}{\Gamma(q)}|f(s,x(s))-f(s,y(s))|ds \nonumber\\
&&+\frac{1}{|\Delta_{2}|}\Bigg(\int_{0}^{1}\frac{(1-s)^{q-1}}{\Gamma(q)}|f(s,x(s))-f(s,y(s))|ds \\
&&+\sum_{i=1}^{n}|\beta_{i}|\int_{0}^{\eta_{i}}\frac{(\eta_{i}-s)^{q}}{\Gamma(q+1)}|f(s,x(s))-f(s,y(s))|ds \nonumber\\
&&+\sum_{i=1}^{n}|\gamma_{i}|\int_{0}^{\eta_{i}}\frac{(\eta_{i}-s)^{q-1}}{\Gamma(q)}|f(s,x(s))-f(s,y(s))|ds \Bigg)\\
&&+\frac{\Gamma(2-\sigma)\Gamma(2-\nu)\Big(|\Delta_{3}|+2|\Delta_{2}|\Big)}{2|\Delta_{1}\Delta_{2}|} \times \Bigg(\int_{0}^{\xi}\frac{(\xi-s)^{q-\sigma-1}}{\Gamma(q-\sigma)}|f(s,x(s))-f(s,y(s))|ds\nonumber\\
&&+\sum_{i=1}^{n}|\alpha_{i}|\int_{0}^{\eta_{i}}\frac{(\eta_{i}-s)^{q-\nu-1}}{\Gamma(q-\nu)}|f(s,x(s))-f(s,y(s))|ds \Bigg)\\
&\leq&\Phi^{-1}\int_{0}^{t}\frac{(t-s)^{q-1}}{\Gamma(q)}g(s)\ln\big(1+|x(s)-y(s)|\big)ds\\
&&+\frac{\Phi^{-1}}{|\Delta_{2}|}\Bigg(\int_{0}^{1}\frac{(1-s)^{q-1}}{\Gamma(q)}g(s)\ln\big(1+|x(s)-y(s)|\big)ds
\end{eqnarray*}
\begin{eqnarray*}
&&+\sum_{i=1}^{n}|\beta_{i}|\int_{0}^{\eta_{i}}\frac{(\eta_{i}-s)^{q}}{\Gamma(q+1)}g(s)\ln\big(1+|x(s)-y(s)|\big)ds \nonumber\\
&&+\sum_{i=1}^{n}|\gamma_{i}|\int_{0}^{\eta_{i}}\frac{(\eta_{i}-s)^{q-1}}{\Gamma(q)}g(s)\ln\big(1+|x(s)-y(s)|\big)ds\Bigg) \nonumber\\
&&+\frac{\Phi^{-1}\Gamma(2-\sigma)\Gamma(2-\nu)\Big(|\Delta_{3}|+2|\Delta_{2}|\Big)}{2|\Delta_{1}\Delta_{2}|} \nonumber\\
&&\times \Bigg(\int_{0}^{\xi}\frac{(\xi-s)^{q-\sigma-1}}{\Gamma(q-\sigma)}g(s)\ln\big(1+|x(s)-y(s)|\big)ds\nonumber \\
&&+\sum_{i=1}^{n}|\alpha_{i}|\int_{0}^{\eta_{i}}\frac{(\eta_{i}-s)^{q-\nu-1}}{\Gamma(q-\nu)}g(s)\ln\big(1+|x(s)-y(s)|\big)ds \Bigg)\nonumber\\
&\leq&\Phi^{-1}\psi(\|x-y\|)\Bigg\{\int_{0}^{1}\frac{(1-s)^{q-1}}{\Gamma(q)}g(s)ds+\frac{1}{|\Delta_{2}|}\Bigg(\int_{0}^{1}\frac{(1-s)^{q-1}}{\Gamma(q)}g(s)ds \nonumber\\
&&+\sum_{i=1}^{n}|\beta_{i}|\int_{0}^{\eta_{i}}\frac{(\eta_{i}-s)^{q}}{\Gamma(q+1)}g(s)ds +\sum_{i=1}^{n}|\gamma_{i}|\int_{0}^{\eta_{i}}\frac{(\eta_{i}-s)^{q-1}}{\Gamma(q)}g(s)ds\Bigg) \nonumber \\
&&+\frac{\Gamma(2-\sigma)\Gamma(2-\nu)\Big(|\Delta_{3}|+2|\Delta_{2}|\Big)}{2|\Delta_{1}\Delta_{2}|}\times \Bigg(\int_{0}^{\xi}\frac{(\xi-s)^{q-\sigma-1}}{\Gamma(q-\sigma)}g(s)ds\\
&&+\sum_{i=1}^{n}|\alpha_{i}|\int_{0}^{\eta_{i}}\frac{(\eta_{i}-s)^{q-\nu-1}}{\Gamma(q-\nu)}g(s)\big)ds \Bigg)\Bigg\}\\
&=& \Phi^{-1}\psi(\|x-y\|)\Phi\\
&=&\psi(\|x-y\|).
\end{eqnarray*}
Then, $\|\mathcal{A}x-\mathcal{A}y\|\leq \psi(\|x-y\|)$. Hence, $\mathcal{A}$ is a nonlinear contraction. Therefore, by Lemma \ref{lem 3.1}, the operator $\mathcal{A}$ has a unique fixed point in $\mathcal{C}$, which is a unique solution of problem \eqref{eq-1.1}-\eqref{eq-1.2}. This completes the proof.
\end{proof}
Our last existence result is based on Leray-Schauder nonlinear alternative \cite{Granas2003}.

\begin{lemma} [Nonlinear alternative for single valued maps \cite{Granas2003}]\label{lem 3.2}
Let $E$ be a Banach space, $E_{1}$ a closed, convex subset of $E$, $U$ an open subset of $E_{1}$ and $0\in U$. Suppose that $A:\bar{U}\rightarrow E_{1}$ is a continuous, compact (that is $A(\bar{U})$ is a relatively compact subset of $E_{1}$) map. Then either
\item[(i)] $A$ has a fixed point in $\bar{U}$, or
\item[(ii)] there is a $x\in \partial {U}$ (the boundary of $U$ in $E_{1}$) and $\lambda \in (0,1)$ with $x=\lambda A(x)$.
\end{lemma}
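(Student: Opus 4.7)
The plan is to deduce this classical nonlinear alternative from the Schauder fixed point theorem via a contrapositive/extension argument. I assume that alternative (ii) fails, i.e.\ for every $x\in\partial U$ and every $\lambda\in(0,1)$ we have $x\neq \lambda A(x)$, and I will produce a fixed point of $A$, which is alternative (i).

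The central object is the set
\[
S=\bigl\{x\in\bar U : x=\lambda A(x)\text{ for some }\lambda\in[0,1]\bigr\}.
\]
First I would check that $S$ is closed in $\bar U$: given $x_n\in S$ with $x_n=\lambda_n A(x_n)$ and $x_n\to x$, the compactness of $A$ yields a convergent subsequence of $A(x_n)$, and after passing to a further subsequence $\lambda_n\to\lambda\in[0,1]$, continuity of $A$ forces $x=\lambda A(x)$. Next, I verify $S\cap \partial U=\emptyset$: a $\lambda=0$ representative equals $0$, which lies in $U$ and not in $\partial U$; $\lambda\in(0,1)$ is excluded by hypothesis; and a $\lambda=1$ representative in $\partial U$ would itself be a fixed point of $A$, so (i) would already hold and we would be done.

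With $S$ and $\partial U$ disjoint closed subsets of the metric space $E_1$, Urysohn's lemma produces a continuous $\phi:E_1\to[0,1]$ with $\phi\equiv 1$ on $S$ and $\phi\equiv 0$ on $E_1\setminus U$ (so in particular on $\partial U$). Define the patched map
\[
B(x)=\begin{cases}\phi(x)A(x), & x\in\bar U,\\ 0, & x\in E_1\setminus\bar U,\end{cases}
\]
which agrees on the overlap $\partial U$ and is therefore continuous on all of $E_1$. Because $A(\bar U)$ is relatively compact in $E_1$, Mazur's theorem makes $K:=\overline{\mathrm{conv}}\bigl(A(\bar U)\cup\{0\}\bigr)$ a compact convex subset of $E_1$, and by construction $B(E_1)\subseteq K$. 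In particular $B$ restricts to a continuous self-map of $K$, so Schauder's theorem delivers $x^\ast\in K$ with $B(x^\ast)=x^\ast$.

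To finish, I show $x^\ast\in\bar U$: otherwise $x^\ast=B(x^\ast)=0\in U\subseteq\bar U$, a contradiction. Then $x^\ast=\phi(x^\ast)A(x^\ast)$ puts $x^\ast\in S$, so $\phi(x^\ast)=1$ and $x^\ast=A(x^\ast)$. The main obstacle I anticipate is the Urysohn/extension step: one must confirm that the patched map $B$ is genuinely continuous across $\partial U$ (this is where $\phi\equiv 0$ on $\partial U$ is crucial), and that the compact convex hull $K$ really absorbs the image of $B$, which is what makes Schauder's theorem applicable. The closedness of $S$ via subsequential compactness of $A(\bar U)$ is the other technically delicate point; everything else is bookkeeping.
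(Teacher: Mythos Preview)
The paper does not prove this lemma: it is quoted verbatim from Granas--Dugundji and used as a black box in the proof of Theorem~3.6. So there is no ``paper's own proof'' to compare against; you have supplied an argument where the authors simply cite the literature.

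Your argument is the standard Schauder-based proof and is essentially correct. One small presentational slip: as written, the two pieces $\bar U$ and $E_1\setminus\bar U$ in your definition of $B$ are \emph{disjoint}, so there is no ``overlap $\partial U$'' on which to check agreement. The correct gluing is between the closed sets $\bar U$ and $E_1\setminus U$, whose intersection is $\partial U$; on that intersection $\phi\equiv 0$ (since $\partial U\subseteq E_1\setminus U$), so both formulas give $0$ and the pasting lemma applies. Alternatively, one checks continuity at $x\in\partial U$ directly using that $\phi(x_n)\to 0$ and $A(x_n)$ stays bounded. Everything else---closedness of $S$, the Urysohn separation of $S$ from $E_1\setminus U$, the Mazur step producing the compact convex $K$, and the endgame showing $\phi(x^\ast)=1$---is fine.
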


\begin{theorem} \label{thm 3.3}
Let $f:[0,1]\times\mathbb{R}\longrightarrow\mathbb{R}$ be a continuous function. Further, it is assumed that:
\newline
$(H_{3})$ there exist a function $p \in C([0,1],\mathbb{R^{+}})$ and a nondecreasing function $\psi:\mathbb{R}^{+}\rightarrow \mathbb{R}^{+}$ such that $|f(t,x)|\leq p(t) \psi(|x|)$, $\forall(t,x)\in [0,1]\times \mathbb{R}$;
\newline
$(H_{4})$ there exists a constant $M>0$ such that $\frac{M}{\Theta \psi(M)\|p\|}>1$, where $\Theta$ is given by \eqref{eq-3.2}.
\\
Then the fractional boundary value problem \eqref{eq-1.1}-\eqref{eq-1.2} has at least one solution on $[0,1]$.
\end{theorem}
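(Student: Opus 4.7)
The plan is to apply the Leray--Schauder nonlinear alternative (Lemma \ref{lem 3.2}) to the operator $\mathcal{A}:\mathcal{C}\to\mathcal{C}$ defined in \eqref{eq-3.1}, whose fixed points are exactly the solutions of \eqref{eq-1.1}--\eqref{eq-1.2}. I would first fix the open set $U=\{x\in\mathcal{C}:\|x\|<M\}$ with $M$ as in $(H_{4})$, so that $0\in U$ and $\bar U$ is a closed convex subset of the Banach space $\mathcal{C}$. The proof then splits into two independent tasks: (a) verify that $\mathcal{A}:\bar U\to\mathcal{C}$ is continuous and compact, and (b) rule out alternative (ii) of Lemma \ref{lem 3.2} by producing an a priori bound on any $x$ satisfying $x=\lambda\mathcal{A}x$ with $\lambda\in(0,1)$.

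For continuity, if $x_{n}\to x$ in $\mathcal{C}$, then by the continuity of $f$ and the uniform bound $|f(s,x_{n}(s))|\le\|p\|\psi(M)$ on $\bar U$, the dominated convergence theorem applied to each of the integral terms in \eqref{eq-3.1} yields $\|\mathcal{A}x_{n}-\mathcal{A}x\|\to0$. For compactness I would use Arzel\`a--Ascoli. Uniform boundedness of $\mathcal{A}(\bar U)$ is obtained by running exactly the estimate used in Theorem \ref{thm 3.1}, but now bounding $|f(s,x(s))|$ by $p(s)\psi(|x(s)|)\le\|p\|\psi(M)$; this gives $\|\mathcal{A}x\|\le\Theta\,\psi(M)\,\|p\|$ for all $x\in\bar U$. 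Equicontinuity on $[0,1]$ reduces to controlling two kinds of differences for $0\le t_{1}<t_{2}\le 1$: the fractional integral term
\[
\Bigl|\int_{0}^{t_{2}}\tfrac{(t_{2}-s)^{q-1}}{\Gamma(q)}f(s,x(s))\,ds-\int_{0}^{t_{1}}\tfrac{(t_{1}-s)^{q-1}}{\Gamma(q)}f(s,x(s))\,ds\Bigr|\le \tfrac{\|p\|\psi(M)}{\Gamma(q+1)}\bigl(t_{2}^{q}-t_{1}^{q}+2(t_{2}-t_{1})^{q}\bigr),
\]
and the linear-in-$t$ coefficient term, which contributes a factor proportional to $(t_{2}-t_{1})$ times a constant built from $\Delta_{1},\Delta_{2},\Delta_{3},\xi,\eta_{i},\alpha_{i}$. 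Both tend to $0$ as $t_{2}-t_{1}\to 0$ uniformly in $x\in\bar U$, so $\mathcal{A}(\bar U)$ is relatively compact.

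For the a priori estimate, suppose $x=\lambda\mathcal{A}x$ for some $\lambda\in(0,1)$ and some $x\in\partial U$. Then, by exactly the same chain of inequalities as in the boundedness step but with $\psi(M)$ replaced by $\psi(\|x\|)$ (which is legal since $\psi$ is nondecreasing and $|f(s,x(s))|\le p(s)\psi(\|x\|)$), I obtain
\[
\|x\|\le\lambda\,\|\mathcal{A}x\|\le\Theta\,\psi(\|x\|)\,\|p\|,
\]
that is, $\|x\|/\bigl(\Theta\,\psi(\|x\|)\,\|p\|\bigr)\le 1$. Since $x\in\partial U$ forces $\|x\|=M$, this contradicts $(H_{4})$. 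Consequently alternative (ii) of Lemma \ref{lem 3.2} cannot hold, so alternative (i) applies and $\mathcal{A}$ has a fixed point in $\bar U$, which is a solution of \eqref{eq-1.1}--\eqref{eq-1.2} on $[0,1]$.

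The routine parts are continuity and uniform boundedness, which parallel the estimates already performed in Theorems \ref{thm 3.1} and \ref{thm 3.2}. The step requiring the most care is the equicontinuity verification, because the operator \eqref{eq-3.1} contains six different integral blocks involving $\eta_{i}$, $\xi$ and singular kernels of three different fractional orders ($q-1$, $q-\sigma-1$, $q-\nu-1$); only the first block and the factor $2\Delta_{2}t$ depend on $t$, so one must track precisely which terms contribute to $\mathcal{A}x(t_{2})-\mathcal{A}x(t_{1})$ and show the bound is independent of $x\in\bar U$. Once this bookkeeping is done, the application of Arzel\`a--Ascoli and Lemma \ref{lem 3.2} is immediate.
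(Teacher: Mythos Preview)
Your proposal is correct and follows essentially the same route as the paper: apply Lemma~\ref{lem 3.2} to $\mathcal{A}$ on $U=\{x:\|x\|<M\}$, verify complete continuity via Arzel\`a--Ascoli, and exclude alternative~(ii) through the a~priori bound $\|x\|\le\Theta\,\psi(\|x\|)\,\|p\|$ that contradicts $(H_{4})$. The only cosmetic difference is in the equicontinuity step, where the paper differentiates $\mathcal{A}x$ and bounds $|(\mathcal{A}x)'(t)|\le\Omega f_{\max}$ (with $\Omega$ from \eqref{eq-3.3}) to get a Lipschitz estimate, rather than splitting the difference $\mathcal{A}x(t_{2})-\mathcal{A}x(t_{1})$ directly as you do.
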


\begin{proof}
Let consider the operator $\mathcal{A} : \mathcal{C}\rightarrow\mathcal{C}$ defined by \eqref{eq-3.1}. In view of the continuity of $f$, the operator $\mathcal{A}$ is continuous.
Firstly, we will show that the operator $\mathcal{A}$, maps bounded sets into bounded sets in $\mathcal{C}$. For a positive number $\rho$, let $\mathcal{B}_{\rho}=\{x\in \mathcal{C}:\|x\|\leq \rho\}$ be bounded set in $\mathcal{C}$. Then, for $t\in[0,1]$ and $x\in \mathcal{B}_{\rho}$ together with $(H_{3})$, we obtain
\begin{eqnarray*}
|(\mathcal{A}x)(t)|&=&\Bigg|\int_{0}^{t}\frac{(t-s)^{q-1}}{\Gamma(q)}f(s,x(s))ds+\frac{1}{\Delta_{2}}\Bigg(-\int_{0}^{1}\frac{(1-s)^{q-1}}{\Gamma(q)}f(s,x(s))ds\\ &&+\sum_{i=1}^{n}\beta_{i}\int_{0}^{\eta_{i}}\frac{(\eta_{i}-s)^{q}}{\Gamma(q+1)}f(s,x(s))ds+\sum_{i=1}^{n}\gamma_{i}\int_{0}^{\eta_{i}}\frac{(\eta_{i}-s)^{q-1}}{\Gamma(q)}f(s,x(s))ds\Bigg) \end{eqnarray*}
\begin{eqnarray*}
&&+\frac{\Gamma(2-\sigma)\Gamma(2-\nu)\Big(\Delta_{3}+2\Delta_{2}t\Big)}{2\Delta_{1}\Delta_{2}}\Bigg(-\int_{0}^{\xi}\frac{(\xi-s)^{q-\sigma-1}}{\Gamma(q-\sigma)}f(s,x(s))ds\nonumber \\
&&+\sum_{i=1}^{n}\alpha_{i}\int_{0}^{\eta_{i}}\frac{(\eta_{i}-s)^{q-\nu-1}}{\Gamma(q-\nu)}f(s,x(s))ds \Bigg)\Bigg|\\
&\leq&\int_{0}^{t}\frac{(t-s)^{q-1}}{\Gamma(q)}p(s)\psi(\|x\|)ds+\frac{1}{|\Delta_{2}|}\Bigg(\int_{0}^{1}\frac{(1-s)^{q-1}}{\Gamma(q)}p(s)\psi(\|x\|)ds\\
&&+\sum_{i=1}^{n}|\beta_{i}|\int_{0}^{\eta_{i}}\frac{(\eta_{i}-s)^{q}}{\Gamma(q+1)}p(s)\psi(\|x\|)ds+\sum_{i=1}^{n}|\gamma_{i}|\int_{0}^{\eta_{i}}\frac{(\eta_{i}-s)^{q-1}}{\Gamma(q)}p(s)\psi(\|x\|)ds\Bigg) \\
&&+\frac{\Gamma(2-\sigma)\Gamma(2-\nu)\Big(|\Delta_{3}|+2|\Delta_{2}|\Big)}{2|\Delta_{1}\Delta_{2}|}\Bigg(\int_{0}^{\xi}\frac{(\xi-s)^{q-\sigma-1}}{\Gamma(q-\sigma)}p(s)\psi(\|x\|)ds\nonumber \\
&&+\sum_{i=1}^{n}|\alpha_{i}|\int_{0}^{\eta_{i}}\frac{(\eta_{i}-s)^{q-\nu-1}}{\Gamma(q-\nu)}p(s)\psi(\|x\|)ds \Bigg)\\
&\leq&\Bigg\{\frac{t^{q}}{\Gamma(q+1)}+\frac{1}{|\Delta_{2}|}\Bigg(\frac{1}{\Gamma(q+1)}+\sum_{i=1}^{n}|\beta_{i}|\frac{\eta_{i}^{q+1}}{\Gamma(q+2)}
+\sum_{i=1}^{n}|\gamma_{i}|\frac{\eta_{i}^{q}}{\Gamma(q+1)}\Bigg)\nonumber\\
&&+\frac{\Gamma(2-\sigma)\Gamma(2-\nu)\Big(|\Delta_{3}|+2|\Delta_{2}|\Big)}{2|\Delta_{1}\Delta_{2}|}\Bigg(\frac{\xi^{q-\sigma}}{\Gamma(q-\sigma+1)}+\sum_{i=1}^{n}|\alpha_{i}|\frac{\eta_{i}^{q-\nu}}{\Gamma(q-\nu+1)} \Bigg)\Bigg\}\times \|p\|\psi(\rho)\\
&\leq&\Bigg\{\frac{1}{\Gamma(q+1)}+\frac{1}{|\Delta_{2}|\Gamma(q+2)}\Bigg(1+q+\sum_{i=1}^{n}\eta_{i}^{q}\bigg(\eta_{i}|\beta_{i}|+(q+1)|\gamma_{i}|\bigg)\Bigg)\nonumber\\
&&+\frac{\Gamma(2-\sigma)\Gamma(2-\nu)\Big(|\Delta_{3}|+2|\Delta_{2}|\Big)}{2|\Delta_{1}\Delta_{2}|}\Bigg(\frac{\xi^{q-\sigma}}{\Gamma(q-\sigma+1)}+\sum_{i=1}^{n}|\alpha_{i}|\frac{\eta_{i}^{q-\nu}}{\Gamma(q-\nu+1)} \Bigg)\Bigg\}\times\|p\|\psi(\rho)\\
&=&\Theta\|p\|\psi(\rho).
\end{eqnarray*}
Thus, we conclude that $\|\mathcal{A}x\|\leq \Theta\|p\|\psi(\rho)$.
This clearly validate the uniform boundedness of the set $\mathcal{A}(\mathcal{B}_{\rho})$.

Next, we show that the operator $\mathcal{A}$ maps bounded sets into equicontinuous sets of  $\mathcal{C}=C([0,1],\mathbb{R})$.

Set $f_{max}=\max_{(t,x)\in[0,1]\times \mathcal{B}_{\rho}}\{f(t,x)\}$. For $x\in \mathcal{B}_{\rho}$ and all $t \in[0,1]$, we obtain
\begin{eqnarray*}
|(\mathcal{A}x)'(t)|&=&\Bigg|\int_{0}^{t}\frac{(t-s)^{q-2}}{\Gamma(q-1)}f(s,x(s))ds
+\frac{\Gamma(2-\sigma)\Gamma(2-\nu)}{\Delta_{1}}\Bigg(-\int_{0}^{\xi}\frac{(\xi-s)^{q-\sigma-1}}{\Gamma(q-\sigma)}f(s,x(s))ds\nonumber \\
&&+\sum_{i=1}^{n}\alpha_{i}\int_{0}^{\eta_{i}}\frac{(\eta_{i}-s)^{q-\nu-1}}{\Gamma(q-\nu)}f(s,x(s))ds \Bigg)\Bigg| \\
&\leq&f_{max}\Bigg\{\int_{0}^{t}\frac{(t-s)^{q-2}}{\Gamma(q-1)}ds+\frac{\Gamma(2-\sigma)\Gamma(2-\nu)}{|\Delta_{1}|}\\
&&\times\Bigg(\int_{0}^{\xi}\frac{(\xi-s)^{q-\sigma-1}}{\Gamma(q-\sigma)}ds+\sum_{i=1}^{n}|\alpha_{i}|\int_{0}^{\eta_{i}}\frac{(\eta_{i}-s)^{q-\nu-1}}{\Gamma(q-\nu)}ds \Bigg)\Bigg\}\\
&\leq& \Omega  f_{max},
\end{eqnarray*}
where $\Omega$ is given by \eqref{eq-3.3}.
\\
Hence, for $t_{1}, t_{2}\in [0,1]$ with $t_{1}<t_{2}$, it follows that
\[|(\mathcal{A}x)(t_{1})-(\mathcal{A}x)(t_{2})|\leq\int_{t_{1}}^{t_{2}}|(\mathcal{A}x)'(s)|ds\leq \Omega  f_{max}(t_{2}-t_{1}). \]
So, we have shown that the set $\mathcal{A}(\mathcal{B}_{\rho})$ is equicontinuous in $\mathcal{C}$. Thus, by Arzel\'{a}-Ascoli theorem, we conclude that $\mathcal{A}$ is completely continuous operator.
\\
Finally, it is to show there exists an open $U\subset \mathcal{C}$ with $x\neq \lambda \mathcal{A}(x)$ for any $\lambda \in(0,1)$ and all $x\in\partial U$.
\\
Define $U=\{x\in \mathcal{C}:\|x\|< M\}$, and assume that there exists $x\in\partial U$ such that $x=\lambda \mathcal{A}(x)$ for some $\lambda \in(0,1)$.

Note that the operator $\mathcal{A} : \bar{U}\rightarrow\mathcal{C}$ is continuous and completely continuous.
Using \eqref{eq-3.1}, as before for all $t \in[0,1]$ , we obtain
\[|x(t)|=\big|\lambda(\mathcal{A}x)(t)\big|\leq \Theta\|p\|\psi(\|x\|),\]
which, on taking the norm for $t \in[0,1]$, yields
\[\frac{\|x\|}{\Theta \psi(\|x\|)\|p\|}\leq1,\]
actually contradicts the condition $(H_{4})$. Hence $x\neq \lambda \mathcal{A}(x)$ for $x\in\partial U$, $\lambda \in(0,1)$.
Consequently, by Lemma \ref{lem 3.2}, we deduce that the operator $\mathcal{A}$ has a fixed point in $\bar{U}$, which is a desired solution of the problem \eqref{eq-1.1}-\eqref{eq-1.2}. This completes the proof.
\end{proof}
\section{Examples\label{sec4}}
\begin{exmp}
Consider the following nonlocal fractional boundary value problem

\begin{equation} \label{eq-4.1}
\begin{cases}
^{c}D^{\frac{3}{2}}x(t)=\frac{te^{-\pi{t}}}{56+e^{-2t}}\sin{x}+\frac{e^{-\cos^{2}t}}{\sqrt{64+t}}\tan^{-1}x+\frac{1}{3},\ t\in[0,1],
\\
^{c}D^{\frac{1}{3}}x(\frac{3}{5})=\ ^{c}D^{\frac{1}{4}}x(\frac{4}{5})+\frac{1}{2}\ ^{c}D^{\frac{1}{4}}x(\frac{6}{7}),
\\ x(1)=\frac{1}{3}\int_{0}^{\frac{4}{5}}x(s)ds+\frac{2}{3}\int_{0}^{\frac{6}{7}}x(s)ds+3x(\frac{4}{5})+\frac{1}{7}x(\frac{6}{7}),
\end{cases}
\end{equation}

where $q=\frac{3}{2}$, $\nu=\frac{1}{4}$, $\sigma=\frac{1}{3}$, $\eta_{1}=\frac{4}{5}$, $\eta_{2}=\frac{6}{7}$, $\xi=\frac{3}{5}$, $\alpha_{1}=1$, $\alpha_{2}=\frac{1}{2}$, $\beta_{1}=\frac{1}{3}$, $\beta_{2}=\frac{2}{3}$, $\gamma_{1}=3$, $\gamma_{2}=\frac{1}{7}$, and
$f(t,x)=\frac{te^{-\pi{t}}}{56+e^{-2t}}\sin{x}+\frac{e^{-\cos^{2}t}}{\sqrt{64+t}}\tan^{-1}x+\frac{1}{3}$.
\\
Since

\begin{eqnarray*}
|f(t,x)-f(t,y)|&\leq&\frac{te^{-\pi{t}}}{56+e^{-2t}}|\sin{x}-\sin{y}|+\frac{e^{-\cos^{2}t}}{\sqrt{64+t}}|\tan^{-1}{x}-\tan^{-1}y|\\
&\leq&\frac{1}{56}|x-y|+\frac{1}{8}|x-y|\\
&=&\frac{1}{7}|x-y|,
\end{eqnarray*}
then $(H_{1})$ is satisfied with $L=\frac{1}{7}$.
\\
Using the given data, we find that $\Delta_{1}=-0.51192$, $\Delta_{2}=-\frac{313}{105}$, $\Delta_{3}=\frac{13774}{3675}$, $\Theta=5.70719$. Hence, we get $L \Theta \simeq 0.81531<1$.

Thus all the conditions of Theorem \ref{thm 3.1} are satisfied. So, the fractional boundary value problem \eqref{eq-4.1} has a unique solution on $[0,1]$.

\end{exmp}

\begin{exmp}
Consider a fractional boundary value problem given by

\begin{equation} \label{eq-4.2}
\begin{cases}
^{c}D^{\frac{7}{6}}x(t)=\frac{1}{11}e^{-t}\Big(\frac{2x^{3}}{1+x^{2}}+\frac{7+t}{2(5+\cos{t})}+1\Big),\ t\in[0,1],
\\
^{c}D^{\frac{1}{2}}x(\frac{1}{5})=2\ ^{c}D^{\frac{1}{3}}x(\frac{1}{4})+3\ ^{c}D^{\frac{1}{3}}x(\frac{2}{3}),
\\
x(1)=\frac{2}{5}\int_{0}^{\frac{1}{4}}x(s)ds+\frac{1}{7}\int_{0}^{\frac{2}{3}}x(s)ds+\frac{1}{2}x(\frac{1}{4})+x(\frac{2}{3}),
\end{cases}
\end{equation}

where $q=\frac{7}{6}$, $\nu=\frac{1}{3}$, $\sigma=\frac{1}{2}$, $\eta_{1}=\frac{1}{4}$, $\eta_{2}=\frac{2}{3}$, $\xi=\frac{1}{5}$, $\alpha_{1}=2$, $\alpha_{2}=3$, $\beta_{1}=\frac{2}{5}$, $\beta_{2}=\frac{1}{7}$, $\gamma_{1}=\frac{1}{2}$, $\gamma_{2}=1$, and
$f(t,x)=\frac{1}{11}e^{-t}\Big(\frac{2x^{3}}{1+x^{2}}+\frac{7+t}{2(5+\cos{t})}+1\Big)$.
\\
A simple computation gives
\[ \Delta_{1}=-2.32863,\ \Delta_{2}=-\frac{73}{105},\ \Delta_{3}=-\frac{827}{2520},\ \Theta=4.67261.\]

Clearly,
$$|f(t,x)|=\Bigg|\frac{1}{11}e^{-t}\Big(\frac{2x^{3}}{1+x^{2}}+\frac{7+t}{2(5+\cos{t})}+1\Big)\Bigg| \leq \frac{2}{11}e^{-t}(|x|+1)$$.

Choosing $p(t)=\frac{2}{11}e^{-t}$ and $\psi(|x|)=|x|+1$. Then, we have
$$\frac{M}{\Theta \psi(M)\|p\|}=\frac{11M}{2(4.67261)(M+1)}>1,$$

which implies that there exists a constant $M>5.64742$. Hence, by Theorem \ref{thm 3.3},
the nonlocal boundary value problem \eqref{eq-4.2} has at least one solution on $[0, 1]$.

\end{exmp}

\begin{exmp}
As a third example we consider the fractional boundary value problem
\begin{equation} \label{eq-4.3}
\begin{cases}
^{c}D^{\frac{4}{3}}x(t)=\frac{1}{6}e^{-t^{2}} \ln(1+|x|),\ t\in[0,1],
\\
^{c}D^{\frac{4}{5}}x(\frac{3}{11})= \frac{3}{7} \ ^{c}D^{\frac{2}{3}}x(\frac{7}{8})+ \frac{11}{12} \ ^{c}D^{\frac{2}{3}}x(\frac{8}{9}),
\\
x(1)= \frac{1}{4}\int_{0}^{\frac{7}{8}}x(s)ds+\frac{3}{2}\int_{0}^{\frac{8}{9}}x(s)ds+\frac{1}{10}x(\frac{7}{8})+\frac{2}{5}x(\frac{8}{9}),
\end{cases}
\end{equation}
where
$q=\frac{4}{3}$, $\nu=\frac{2}{3}$, $\sigma=\frac{4}{5}$, $\eta_{1}=\frac{7}{8}$, $\eta_{2}=\frac{8}{9}$, $\xi=\frac{3}{11}$, $\alpha_{1}=\frac{3}{7}$, $\alpha_{2}=\frac{11}{12}$, $\beta_{1}=\frac{1}{4}$, $\beta_{2}=\frac{3}{2}$, $\gamma_{1}=\frac{1}{10}$, $\gamma_{2}=\frac{2}{5}$, and
$f(t,x)=\frac{1}{6}e^{-t^{2}} \ln(1+|x|)$.
\\
With the given values, it is found that
\[ \Delta_{1}=-0.496989,\ \Delta_{2}=-\frac{101}{96},\ \Delta_{3}=\frac{9079}{34560}.\]
\\
By choosing $g(t)=\frac{1}{6}e^{-t^2}$, we find that $\Phi=0.809777$.
\\
Obviously,
\begin{eqnarray*}
|f(t,x)-f(t,y)|&=&\Big|g(t)\Big(\ln(1+|x|)-\ln(1+|y|)\Big)\Big|\\
&=&g(t)\Bigg|\ln\Bigg(\frac{1+|x|}{1+|y|}\Bigg)\Bigg|\\
&\leq&g(t)\ln(1+|x-y|)\\
&\leq&g(t)\Phi^{-1}\ln(1+|x-y|).
\end{eqnarray*}

Hence, by Theorem \ref{thm 3.2}, the nonlocal boundary value problem \eqref{eq-4.3} has a unique solution
on $[0, 1]$.
\end{exmp}


\section*{Acknowledgments}




\end{document}